\documentclass[11pt,reqno]{amsart}
\usepackage{graphicx}
\pdfoutput=1
\usepackage{amsfonts,amsmath,amssymb}
\usepackage{hyperref}
\usepackage{paralist}
\usepackage[linesnumbered,ruled,vlined,norelsize]{algorithm2e}

\oddsidemargin=0.1in \evensidemargin=0.1in \textwidth=6.2in
\headheight=.2in \headsep=0.1in \textheight=8.4in

\usepackage{thmtools}
\declaretheoremstyle[bodyfont=\normalfont]{noncursive}
\declaretheorem{theorem}
\declaretheorem[numberwithin=section]{lemma}

\declaretheorem[numberlike=lemma]{proposition}

\declaretheorem[style=noncursive,numberlike=lemma]{definition}
\declaretheorem[style=noncursive,numberlike=lemma]{example}
\declaretheorem[style=noncursive,numberlike=lemma]{remark}

\sloppy

\newcommand{\im}{\ensuremath{\mbox{\rm Im}\,}}

\newcommand{\CC}[1]{\mathbb{C}^{#1}}

\newcommand{\RR}[1]{\mathbb{R}^{#1}}

\newcommand{\ph}{\varphi}

\numberwithin{equation}{section}

\newcommand{\tc}[1]{T^{\mathbb C}_{#1}}
\def\Label#1{\label{#1}}

\def\1#1{\overline{#1}}
\def\2#1{\widetilde{#1}}
\def\3#1{\widehat{#1}}
\def\4#1{\mathbb{#1}}
\def\5#1{\frak{#1}}
\def\6#1{{\mathcal{#1}}}

\def\C{{\4C}}

\sloppy

\title[Real-analytic coordinates]{Real-analytic coordinates for smooth strictly pseudoconvex CR-structures}

\author {I. Kossovskiy}
\address{Department of Mathematics, Masaryk University in Brno//
Faculty of Mathematics, University of Vienna}
\email{kossovskiyi@math.muni.cz, ilya.kossovskiy@univie.ac.at}
\author {D. Zaitsev}
\address{School of Mathematics, Trinity College, Dublin}
\email{zaitsev@maths.tcd.ie}
\keywords{strictly pseudoconvex hypersurfaces, CR-mappings, CR-structures, analytic regularity, change of complex structure, leaf space of foliation}

\begin{document}

\maketitle

\date{\today}

\begin{abstract}
For a smooth strictly pseudoconvex hypersurface in a complex manifold, 
we give a necessary and sufficient condition for being CR-diffeomorphic
to a real-analytic CR manifold. Our condition amounts to a holomorphic extension property for the canonically associated function expressing
$2$-jets of the formal Segre varieties in terms of their $1$-jets.
We also express this condition in equivalent terms for a Fefferman type determinant \cite{feffer}. 
\end{abstract}

\tableofcontents

\section{Introduction}
In this paper, we address the following problem:

\smallskip

\noindent{\bf Problem 1.}
Let $M$ be a smooth real hypersurface in a complex manifold $X$. 
Find necessary and sufficient conditions on $M$
to be CR-diffeomorphic to a real-analytic CR manifold.

\smallskip

If $M$ is CR-diffeomorphic to a real-analytic CR manifold,
we shall call it 
{\em analytically regularizable}.
Problem 1 is of interest because of
the case of real-analytic CR manifolds
being much better studied
with more results and tools available
such as complexification and Segre varieties
(see e.g.\  \cite{ber}).
On the other hand, 
the problem
seems to be widely open
even for strictly pseudoconvex  hypersurfaces,
where it is non-trivial,
i.e.\ there exist smooth non-analytic hypersurfaces 
that are analytically regularizable and there exist those that are not,
see the end of Section 2 for respective examples.
(The latter phenomenon is in contrast with the case of 
hypersurfaces of mixed Levi form signature,
where any CR-diffeomorphism to a real-analytic hypersurface
extends holomorphically to both sides,
hence any analytically regularizable hypersurface must be
already real-analytic in its ambient complex manifold $X$.)

The goal of this paper is 
to provide 
a nontrivial {\em necessary and sufficient} condition (Condition E below)
giving a solution to Problem 1
in the case
when $M$ is {\em strictly pseudoconvex}.
Our condition is formulated
 in terms of holomorphic extension of certain 
functions invariantly associated to $M$ (these functions can be viewed as Fefferman type determinants \cite{feffer}, as shown in Section 2).
Thus our results provide a {\em two-way bridge} for Problem 1
with the much better studied 
questions of holomorphic extension of smooth functions from real submanifolds in complex manifolds.


We proceed by giving a high level non-technical
formulation of our result,
while more precise details can be found 
in Section 2. 
Given a smooth real hypersurface $M$
in a complex manifold $X$ of dimension $n+1$,
consider smooth local defining equations
of the kind $\rho(z, \bar z)=0$ for $M$ with $d\rho\ne 0$.
Then $\rho$ can be formally complexified at each point,
i.e.\ there exist formal power series $\rho(z,\bar w)$
giving for $w=z$ the Taylor series of $\rho$.
This allows to invariantly define formal
Segre varieties $$Q_p=\{z: \rho(z, \bar p)=0\}$$ for $p\in M$,
as well as their $k$-jets $j^k_pQ_p$ for every $k\ge1$.
In particular (see \cite{websterrefl}), 
if $M$ is strictly pseudoconvex,
then $j^kQ_p$, $p\in M$,
defines a canonical embedding of $M$
into the space $J^{k,n}(X)$ of all $k$-jets of complex-analytic
hypersurfaces for every $k\ge 1$,
given by $$p\mapsto j^kQ_p,$$ and the image of $M$ appears to be totally real.

Furthermore, for any $k,l\ge 1$,
we obtain canonical smooth maps
$s^{k,l}$
between respective images of those embeddings of $M$,
sending  $j^kQ_p$ to  $j^lQ_p$ for every $p$.
To formulate our main result,
denote by $M_J\subset J^{1,n}$
the image of the embedding $p\mapsto j^1Q_p$ of $M$ as above,
and consider the map 
$$s^{1,2}\colon M_J\to J^{2,n}$$
sending $j^1_pQ_p$ to  $j^2_pQ_p$
for every $p\in M$.
Let $\3M\subset J^{1,n}$ be
the (smooth) real  hypersurface
consisting of all $1$-jets with base points in $M$.

\begin{theorem}\Label{main0} 
Let $M$ be a smooth strictly pseudoconvex hypersurface
in a complex manifold $X$.
Then $M$ is analytically regularizable
(i.e.\ CR-diffeomorphic to a real-analytic  CR manifold)
if and only if the map $s^{1,2}$
admits a local holomorphic extension
to the pseudoconvex side of 
$\3M$ in $J^{1,n}$ valued in $J^{2,n}$
that is smooth up to $\3M$.
\end{theorem}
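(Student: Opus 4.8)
The plan is to encode the local geometry of $M$ through its Segre family and the second-order holomorphic differential system that this family satisfies, and to read off $s^{1,2}$ as the datum recording the coefficients of that system. Working in local coordinates $(z,w)\in\C^n\times\C$ with $M=\{\im w=\varphi(z,\bar z,\re w)\}$, I would complexify $\varphi$ and write the Segre variety $Q_p$ as a graph $w=S(z,\bar p)$ over $z$. The coordinates of $j^1_pQ_p\in J^{1,n}$ are then the base point together with $\partial_z S$, while $j^2_pQ_p$ records in addition $\partial_z^2 S$; thus $s^{1,2}$ is precisely the assignment expressing the second derivatives $\partial_z^2 S$ as a function of the first-order data. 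For a Levi-nondegenerate, in particular strictly pseudoconvex, hypersurface the Segre varieties are the integral manifolds of a complete second-order system, and $s^{1,2}$ is exactly the right-hand side of that system. The whole argument rests on the principle that holomorphy of this right-hand side is equivalent to holomorphy of the entire Segre family, hence to analytic regularity.

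For the necessity direction I would argue that if $M$ is analytically regularizable, then it carries a real-analytic CR structure; passing to a real-analytic representative, which by Levi-nondegeneracy is locally CR-embeddable as a real-analytic hypersurface, the associated Segre family is genuinely holomorphic, so the corresponding map $s^{1,2}$ is the restriction to $M_J$ of a holomorphic map defined on a full neighborhood in $J^{1,n}$. Strict pseudoconvexity selects the pseudoconvex side of $\3M$ and guarantees that this holomorphic extension is smooth up to $\3M$. The one point needing care here is that $s^{1,2}$ is built from the given embedding $M\subset X$, whereas the regularizing CR-diffeomorphism need not extend holomorphically; I would dispose of this by invoking the CR-invariance of the extension condition established in Section~2 (Condition~E), which lets me compute $s^{1,2}$ in the real-analytic representative without altering its extendability.

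The substantive direction is sufficiency. Given a holomorphic extension $\tilde s$ of $s^{1,2}$ to the pseudoconvex side of $\3M$, smooth up to $\3M$, I would first use $\tilde s$ as the right-hand side of the second-order holomorphic differential system above and integrate it by the holomorphic existence and uniqueness theorem, so that its solution manifolds depend holomorphically on their initial $1$-jets. This produces a holomorphic foliation extending the Segre family, whose leaf space is a complex manifold $N$; the map sending $p\in M$ to the leaf through $j^1_pQ_p$ realizes $M$ as a real-analytic CR submanifold of $N$, and smoothness up to $\3M$ guarantees that this realization is a smooth CR-diffeomorphism, giving the desired analytic regularization. The higher jet maps $s^{1,k}$ become automatically holomorphic by differentiating the structure relations, so that the formal Segre family converges, which is precisely the passage from a formal to a genuine real-analytic object.

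I expect the main obstacle to lie in this last, constructive, step: upgrading the holomorphic extension of the single second-order datum $s^{1,2}$ to convergence of the full Segre family, and, above all, controlling boundary regularity so that the resulting CR-diffeomorphism is smooth up to $\3M$ rather than merely holomorphic in the interior. Here the pseudoconvexity of the relevant side and the smoothness-up-to-$\3M$ hypothesis are indispensable, and matching the interior holomorphic data with the prescribed values of $s^{1,2}$ along the totally real submanifold $M_J\subset\3M$ is the delicate part of the argument.
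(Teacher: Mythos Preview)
Your broad strategy for sufficiency---integrate the second-order system whose right-hand side is the extended $s^{1,2}$, pass to the leaf space of the resulting Segre-type foliation, and realize $M$ inside that leaf space---matches the paper's. But the central claim, that ``the map sending $p\in M$ to the leaf through $j^1_pQ_p$ realizes $M$ as a \emph{real-analytic} CR submanifold of $N$,'' is exactly the point at issue, and your proposal supplies no mechanism for it. The leaf space $N$ is a complex manifold and the map $M\to N$ is smooth, but nothing you have written forces the image to be real-analytic; you have correctly flagged this as the obstacle yet not resolved it. The paper's resolution is substantial and does not follow from general principles: one first extends the foliation \emph{smoothly} (not holomorphically) across $\widehat M$ by a hands-on argument about integrable distributions near a convex boundary, so that the leaf space is genuinely a smooth manifold with boundary $M_J$; one then glues the leaf-space complex structure (conjugated) on the pseudoconcave side with the standard structure on the pseudoconvex side, checks that the two agree to infinite order along $M$ by approximating $M$ to each finite order by a real-analytic hypersurface, and invokes Newlander--Nirenberg to obtain honest holomorphic coordinates on a full neighborhood. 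In those coordinates both sides carry Segre-type families, and Webster's anti-holomorphic reflection $\tau$ is defined on complementary wedges with common totally real edge $M_J$; edge-of-the-wedge extends $\tau$ to an anti-holomorphic involution of a full neighborhood, whose fixed-point set is automatically real-analytic and coincides with $M_J$. This reflection-plus-edge-of-the-wedge step is the engine that converts ``smooth with holomorphic Segre data on one side'' into ``real-analytic,'' and it is absent from your outline.

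On the necessity side your instinct is right but the logic is slightly off. You do not need an abstract ``CR-invariance of Condition~E''; rather, a smooth CR-diffeomorphism $F$ between strictly pseudoconvex hypersurfaces extends holomorphically to the pseudoconvex side (Lewy/Pin\v{c}uk), and the $1$- and $2$-jet prolongations of this extension transport the trivially holomorphic section $\tilde s^{1,2}$ of the real-analytic target back to the required one-sided holomorphic extension for $M$. That is the whole argument, and it is short.
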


Note that since $M_J$ is generic in $J^{1,n}$,
if such holomorphic extension of $s^{1,2}$ exists,
it is necessarily unique.
Also, the CR-diffeomorphism to a real-analytic CR manifold
is unique if exists,
up to a real-analytic CR-diffeomorphism,
 as follows directly from the reflection principle for real-analytic strictly pseudoconvex hypersurfaces.

We conclude by mentioning that, 
while analyticity problems have been 
studied for other geometric structures, e.g. Riemannian structures
(see e.g.\ \cite{kazdan}),
no similar nontrivial necessary and sufficient conditions
seem to be known.


\medskip

\begin{center} 
\bf Acknowledments
\end{center}

\medskip
We would like to thank Laszlo Lempert for his helpful remarks 
following a talk given by the first author on the subject.

The first author was supported by the GACR (Czech Grant Agency) and the FWF (Austrian Science Fund) during the preparation of this paper.

\section{Condition E for smoothly embedded real hypersurfaces}

 We now describe in details the holomorphic extension condition
 in  \autoref{main0}
 that we shall call
  Condition E.
 We shall give below both an invariant and a coordinate-based formulations of it. 
 For the basic concepts in CR-geometry (such as Segre varieties and formal submanifolds) we refer to \cite{ber}, and for {\em jet bundles} and related concepts to \cite{cs}. 

Let 
$$
	\pi \colon J^{1,n}
	\to \CC{n+1}
$$ 
be the bundle
of $1$-jets of complex hypersurfaces of $\CC{n+1}$, 
which is a projective holomorphic   bundle over $\CC{n+1}$ with the fiber dimension $n$,
and
$M\subset\CC{n+1},\,n\geq 1$, 
be a smooth strictly pseudoconvex real hypersurface. 
Then the complex tangent bundle $\tc{}M$ induces
the natural embedding 
$$
	\varphi\colon M\to J^{1,n}, \quad x \mapsto \bigl(x,[\tc{x}M]\bigr).
$$ 
The image 
$$
	\ph(M)=:M_J \subset J^{1,n}
$$ 
is a  smooth $(2n+1)$-dimensional real submanifold  in the $(2n+1)$-dimensional complex manifold $J^{1,n}$. Webster in \cite{websterrefl} observed that 
$M_J\subset J^{1,n}$ is {\em totally real} whenever $M$ is Levi-nondegenerate.
Next, 
associated with $M$ is the  smooth pseudoconvex real hypersurface
$$
	\widehat M:=\pi^{-1}(M) \subset J^{1,n}.
$$
The manifold $M_J$ is a smooth submanifold in $\widehat M$.  
Note that  $\widehat M$ itself is locally CR-equivalent 
to $M\times \CC{n}$ (and thus is {\em holomorphically degenerate}, see \cite{ber}).  
In what follows we denote by $U^+$ the pseudoconvex side of $M$ and by $$\widehat U^+:=\pi^{-1}(U^+)$$ that of $\widehat M$ ($\widehat U^+$ is locally biholomorphic to $U^+\times\CC{}$ at a point in $\widehat M$).

We next fix a point $p\in M$.
Since $M$ is smooth, we may consider at each point $q\in M$ 
near $p$, its formal complexfication at $q$ as a formal complex hypersurface in $\CC{n+1}\times\overline{\CC{n+1}}$ obtained by complexifying the formal Taylor series of its defining function at $q$. In this way, the formal Segre variety $S_q$ of $M$ at $q$ is well defined. Then the  $2$-jets 
\begin{equation}\Label{2jets}
j^2_qS_q,\,\,q\in M
\end{equation} of such formal Segre varieties induce a smooth embedding  
of $M$ (and hence $M_J\subset J^{1,n}$) into the bundle  
$$J^{2,n}=J^{2,n}(\CC{n+1})$$ 
of $2$-jets of complex hypersurfaces in $\CC{n+1}$. The space $J^{2,n}$  is canonically a fiber bundle  
$$
\pi^2_1 \colon J^{2,n}\to J^{1,n}.
$$
The above  $2$-jet embedding defines a canonical section of $\pi^2_1$, 
$$s\colon M_J\to J^{2,n}.$$ 

Now our analyticity condition for a  smooth strictly pseudoconvex submanifold looks as follows.

\begin{definition}\Label{coorfree} We say that $M$ {\em satisfies Condition E at $p$}, if  for some choice of a neighborhood $U$ of $p$, the section $s$ extends as a smooth section of $\pi^2_1$ over the pseudoconvex side $\widehat U^+ \cup \widehat M$, which is  furthermore holomorphic in $\widehat U^+$.
\end{definition}


We next give an (equivalent to the above) coodinate formulation of Condition E. If $M\subset\CC{n+1}$ is a smooth hypersurface with the defining equation 
\begin{equation}\Label{rhoeq}
\rho(Z,\bar Z)=0,\quad Z=(z,w)=(z_1,...,z_n,w)\in\CC{n+1},
\end{equation}
$p\in M$ the distinguished point and $\rho_w(p,\bar p)\neq 0$, then its formal Segre variety at a point $q=(\tilde q,q_{n+1})\in M$ nearby $p$ is a graph of a function $w(z)$ (considered as a formal power series in $(z-\tilde q)$). Then the $2$-jets \eqref{2jets} amount  to either the scalar function 
$\Phi$ defined pointwise as $w''(z)$ for $z=\tilde q$  (case $n=1$),  or to the symmetric matrix function $\Phi=(\Phi_{ij}),\,\,i,j=1,...,n$, defined pointwise as the collection of  $w_{z_iz_j}$ for $z=\tilde q$ (case $n>1$). It is possible to verify that, in turn, for $n=1$ we have
\begin{equation}\Label{Phi1}
	\Phi=\frac{1}{(\rho_w)^3}
	\begin{vmatrix} \rho & \rho_z & \rho_w\\ \rho_z & \rho_{zz} 
	& \rho_{zw} \\ \rho_w & \rho_{zw} & \rho_{ww} \end{vmatrix},
\end{equation}
and for $n>1$ we have 
\begin{equation}\Label{Phi}
	\Phi_{ij}=\frac{1}{(\rho_w)^3}
\begin{vmatrix} \rho & \rho_{z_j} & \rho_w\\ \rho_{z_i} & \rho_{z_iz_j} & \rho_{z_iw} \\ \rho_w & \rho_{z_jw} & \rho_{ww}, \end{vmatrix} \quad i,j=1,...,n.
\end{equation}
 (To obtain \eqref{Phi1},\eqref{Phi}, one has to differentiate twice the identity \eqref{rhoeq} assuming $w$ to be a function of $z$). Both the scalar function \eqref{Phi1} and the matrix valued function \eqref{Phi} can be considered as either smooth functions on the strictly pseudoconvex hypersurface $M$ or as that on the totally real manifold $M_J$ introduced above. 
 
 We shall remark that the invariant determinants in \eqref{Phi1},\eqref{Phi} were used also by Ebenfelt and the second author \cite{ez}
 as well as by Ebenfelt, Duong and the second author
 \cite{edz} for characterizing the Cartan tensor of a Levi-degenerate hypersurface in terms of its defining function. They can be seen as certain generalizations of the determinants used by Fefferman in \cite{feffer} for studying asymptotics of the Bergman metric in a smoothly bounded strictly pseudoconvex domain at a boundary point.
 
In terms of the $\Phi$-function, Condition E reads as follows.
\begin{definition}\Label{coor} We say that $M$ {\em satisfies Condition E at $p$}, if  for some choice of a neighborhood $U$ of $p$, the function $\Phi$ defined on $M_J$ by either \eqref{Phi1} or \eqref{Phi} extends  to the pseudoconvex side $\widehat U^+ \cup \widehat M$ holomorphically and smoothly up to the boundary.
\end{definition}
It is obvious that \autoref{coor} is equivalent to \autoref{coorfree}. 

We give now the more precise local version
of our main result, from which the global result in \autoref{main0}
follows directly in view of the uniqueness
of the extension and the real-analytic CR-structure.

\begin{theorem}\Label{main} 
A  smooth strictly pseudoconvex  real hypersurface $M\subset\CC{n+1},\,n\geq 1$, is locally analytically regularizable
(i.e.\ CR-equivalent near a point $p\in M$ to a real-analytic  hypersurface $\2M\subset\CC{n+1}$) if and only it satisfies  Condition E at $p$.
\end{theorem}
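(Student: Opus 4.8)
The plan is to read the function $\Phi$ as the right-hand side of the \emph{second-order system associated with the Segre family}, and to interpret Condition E as the holomorphic extendability of this right-hand side to the pseudoconvex side. Recall that for Levi-nondegenerate $M$ the (formal) Segre varieties near $p$ are graphs $w=w(z;\zeta)$ depending on $n+1$ complex parameters $\zeta$ (the complexification of $\bar q$). By strict pseudoconvexity the map $\zeta\mapsto(w(z;\zeta),w_z(z;\zeta))$ is, for each $z$, a local biholomorphism, so eliminating $\zeta$ from $w=w(z;\zeta)$ and $w_{z_i}=w_{z_i}(z;\zeta)$ and substituting into $w_{z_iz_j}=w_{z_iz_j}(z;\zeta)$ expresses the second derivatives as a function $G=(G_{ij})$ of $(z,w,w_z)$, i.e.\ as a function on an open subset of $J^{1,n}$. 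By construction the restriction of $G$ to $M_J$ is precisely the function $\Phi$ of \eqref{Phi}, so Condition E (in the form of \autoref{coor}) says exactly that $G$, a priori known only along $M_J\subset\widehat M$, extends holomorphically to $\widehat U^+$ and smoothly up to $\widehat M$. For $n=1$ this is the classical correspondence between Levi-nondegenerate hypersurfaces in $\CC{2}$ and second-order ODEs $w''=G(z,w,w')$; for $n>1$ one obtains a completely integrable system of second-order PDEs.

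For the implication ``analytically regularizable $\Rightarrow$ Condition E'' I would first settle the model case of a real-analytic $\widetilde M$. There the Segre varieties are genuine complex hypersurfaces depending holomorphically on $\bar\zeta$, so the associated $G$ is holomorphic near $\widetilde M_J$ in $J^{1,n}$ and its restriction $\widetilde\Phi$ extends holomorphically to the pseudoconvex side; thus $\widetilde M$ satisfies Condition E. To pass to the given $M$ I would use that a smooth CR-diffeomorphism $F\colon M\to\widetilde M$ of smooth strictly pseudoconvex hypersurfaces extends, by Lewy extension of its CR components together with boundary regularity, to a biholomorphism $\widehat F$ of the pseudoconvex sides that is smooth up to the boundary. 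Since biholomorphisms carry Segre varieties to Segre varieties, the jet prolongations of $\widehat F$ on $J^{1,n}$ and $J^{2,n}$ intertwine the two associated sections, whence $G$ is obtained from $\widetilde G$ by composition with the (holomorphic) prolongation of $\widehat F$; as $\widetilde G$ extends holomorphically and $\widehat F$ is holomorphic and smooth up to the boundary, so does $G$, and Condition E holds for $M$.

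The substantial direction is ``Condition E $\Rightarrow$ analytically regularizable''. Here I would start from the holomorphic extension $\widehat G$ of $G$ on $\widehat U^+$ supplied by Condition E and solve the holomorphic second-order system $w_{z_iz_j}=\widehat G_{ij}(z,w,w_z)$. By holomorphic dependence of solutions on initial data (Cauchy--Kovalevskaya), the solutions constitute a holomorphic family of complex hypersurfaces on the pseudoconvex side, smooth up to $M$ because $\widehat G$ is smooth up to $\widehat M$; these are the candidate Segre varieties, their $2$-jets along $M_J$ agree with $\Phi$ by construction, and strict pseudoconvexity makes them transverse to $M$. For $n>1$ one must check the Frobenius integrability of the overdetermined system; these are holomorphic identities in $\widehat G$ that hold to infinite order along the maximally totally real $M_J$, and should propagate to $\widehat U^+$ by uniqueness of holomorphic functions smooth up to the boundary. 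Passing to the leaf space of this holomorphic foliation yields a complex manifold $\Lambda$ together with the Segre map $q\mapsto Q_q\in\Lambda$, holomorphic on the pseudoconvex side and smooth up to $M$; combining it with its conjugate recovers a real-analytic complexification near $p$, and hence a new integrable complex structure $J$ inducing the original CR structure along $M$, in which the Segre family of $M$ is genuinely holomorphic so that $M$ becomes real-analytic. Passing to $J$-holomorphic coordinates (smooth up to $M$) then realizes $M$ as a real-analytic hypersurface $\widetilde M\subset\CC{n+1}$ and provides the desired CR-diffeomorphism $M\to\widetilde M$.

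I expect the main obstacle to be exactly the final part of the ``$\Leftarrow$'' direction: upgrading the holomorphically reconstructed, a priori one-sided and leaf-space-theoretic Segre family into a genuine \emph{smooth} CR-diffeomorphism onto a real-analytic hypersurface down to the boundary $M$. The delicate points are (i) boundary regularity up to $M$ for solutions of the system with $\widehat G$ only smooth up to $\widehat M$; (ii) identifying the reconstructed complex structure and verifying that it induces the original CR structure on $M$, so that the resulting map is CR; and (iii) for $n>1$, the propagation of complete integrability after holomorphic extension. Throughout, the two inputs I would lean on most are strict pseudoconvexity --- which guarantees Levi-nondegeneracy (hence a well-defined associated system and transverse Segre varieties) as well as the Lewy extension used in the other direction --- and the smoothness-up-to-the-boundary clause built into Condition E.
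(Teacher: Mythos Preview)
Your plan tracks the paper's proof closely. The necessity argument is the same: for real-analytic $\widetilde M$ the associated function $\widetilde\Phi$ is already holomorphic near $\widetilde M_J$, and the jet prolongation of the CR-diffeomorphism (holomorphically extended to the pseudoconvex side, smooth up to the boundary) pulls this back to the required extension for $M$. For sufficiency, your integrability check---the Frobenius conditions are holomorphic on $\widehat U^+$, smooth up to $\widehat M$, and vanish on the maximally totally real $M_J$, hence identically---is exactly the paper's Proposition~4.1.

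Where the paper supplies machinery you leave implicit is precisely at your ``expected obstacles''. For (i), rather than Cauchy--Kovalevskaya with smooth boundary dependence, the paper proves a standalone extension lemma (Proposition~4.2): the distribution $D$, smooth up to $\widehat M$ and satisfying a second-order tangency condition at $M_J$ forced by strict pseudoconvexity, extends to a smooth integrable distribution on a full neighborhood of $0$ in $J^{1,n}$ whose leaves meet $\widehat U^+$ in connected pieces; this is what gives the leaf space the structure of a smooth manifold with boundary $M_J$. For (ii), the paper does not ``combine with the conjugate'' abstractly; it identifies the (conjugated) leaf space with the pseudoconcave side $U^-$, glues this new complex structure to the standard one on $U^+$, and checks agreement along $M$ to all orders by noting that the $k$-jet of the boundary structure depends only on the $(k+2)$-jet of $M$, so one may test against real-analytic approximants.

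There is one genuine step your sketch does not anticipate. After Newlander--Nirenberg you have $M$ sitting as a smooth hypersurface in the new holomorphic coordinates, equipped with two-sided holomorphic ``Segre'' families---but this alone does not force $M$ to be real-analytic. The paper closes the argument by constructing Webster's reflection map $\tau(z,T_zS_\zeta)=(\zeta,T_\zeta S_z)$ on wedges in $J^{1,n}$ on either side of $M_J$, applying the edge-of-the-wedge theorem to extend $\tau$ anti-holomorphically across $M_J$, and then identifying $M_J$ with the fixed-point set of the resulting anti-holomorphic involution, which is automatically a real-analytic totally real submanifold. Your sentence ``the Segre family of $M$ is genuinely holomorphic so that $M$ becomes real-analytic'' is exactly the place where this reflection/edge-of-the-wedge argument is needed.
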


\begin{remark}\Label{Ck}
In fact, the above Condition E for  smooth hypersurfaces can be naturally extended to  hypersurfaces of merely class $C^2$. Following then the details of the proof of \autoref{main} below, it can be seen that, {\em for a $C^k$ smooth strictly pseudoconvex hypersurface $M\subset\CC{n}$ with $k\geq 2$,  Condition E is necessary and sufficient for the existence of a local $C^{k-1}$ smooth CR-diffeomorhism of $M$ onto a real-analytic hypersurface $\tilde M\subset\CC{n+1}$.}
\end{remark}

We finish this section by providing two simple examples of 
smooth non-analytic strictly pseudoconvex hypersurfaces in $\CC{2}$, 
one of which admits and the other does not a CR-diffeomorphism onto a real-analytic hypersurface,
i.e.\ one is analytically regularizable while the other isn't.

\begin{example}
Let $f(z,w)$ be a holomorphic function in the unit ball $\mathbb B^2\subset\CC{2}$ which is smooth up to $\partial\mathbb B^2=S^3$ but does not extend holomorphically across $S^3$, e.g. one can take 
a branch of $e^{(w-1)^{-1/3}}$.
Then, for $\epsilon$ sufficiently small, the map
(small perturbation of identity)
$$
	(z,w)\mapsto (z + \epsilon f(z,w),w)
$$ 
defines a CR-diffeomorphism from $S^3$ onto a smooth
but not analytic strictly pseudoconvex hypersurface $M\subset\CC{2}$
that is obviously analytically regularizable.
\end{example}

\begin{example}
The real hypersurface $M\subset\CC{2}$ 
(a flat perturbation at the origin of the standard hyperquadric) 
given by 
$$
	\im w=|z^2|+e^{-1/|z|^2}
$$
is smooth and strictly pseudoconvex near the origin, 
but is not CR-diffeomorphic to a real-analytic hypersurface 
$\tilde M\subset \C^2$, not even locally at $0$.
Indeed, since $M$ is {\em formally spherical} at $0$
(i.e.\ spherical up to infinite order),
 the existence of a CR-diffeomorphism $H$ onto $\tilde M$ with, say, $H(0)=0$ would mean that $\tilde M$ is also formally
 and hence biholomorphically spherical at $0$.
(The formal expansion of $H$ at $0$ yields a formal transformation of the hyperquadric $\im w=|z|^2$ onto $\tilde M$
which is locally biholomorphic in view of the Chern-Moser theory \cite{chern}.)
Hence $M$ must itself be spherical
in a neighborhood of $0$. 
On the other hand, 
one can see by computing the Chern-Moser's curvature 
(e.g. following \cite{lobumbilic} or using 
the determinant expression \cite{ez}), that $M$ is not spherical in any neighborhood of $0$. 
Thus $M$ is not analytically regularizable
in any neighborhood of $0$.
\end{example}

  \section{Associated differential equations and the necessity of Condition E} 
  
In this section, we show that the necessity of Condition E follows rather easily from the construction of {\em holomorphic differential equations associated with a real-analytic hypersurface}. On the other hand, the sufficiency of Condition E is already quite nontrivial. It is addressed in Section 4.
  
 \subsection{The method of associated differential equations} It was observed by Cartan \cite{cartan} and Segre \cite{segre} (see also Webster \cite{webster}) that the geometry of a real hypersurface in $\CC{2}$ parallels that of a second order ODE
 \begin{equation}\Label{wzz}
 w\rq{}\rq{}=\Phi(z,w,w\rq{}).
 \end{equation}
More generally,  the geometry of a real hypersurface in $\CC{n+1},\,n\geq 1$, parallels that of a complete second order system of PDEs
\begin{equation}\Label{wzkzl}
w_{z_kz_l}=\Phi_{kl}(z_1,...,z_n,w,w_{z_1},...,w_{z_n}),\quad \Phi_{kl}=\Phi_{lk},\quad k,l=1,...,n.
\end{equation}
  Moreover, {\em in the real-analytic case} this parallel becomes algorithmic by using  the Segre family of a real hypersurface. With any real-analytic Levi-nondegenerate hypersurface $M\subset\CC{n+1},\,n\geq 1$ one can uniquely associate a holomorphic ODE \eqref{wzz} ($n=1$) or a holomorphic PDE system \eqref{wzkzl} ($n\geq 2$). The Segre family of $M$ plays a role of a mediator between the hypersurface and the associated differential equations.  A more recent exposition of this method was given in the work \cite{sukhov1,sukhov2} of Sukhov. For recent work on associated differential equations in the degenerate setting, see e.g. the papers  
  \cite{divergence, nonminimalODE, nonanalytic} of the first author with Lamel and Shafikov.

  The associated differential equation procedure is particularly clear in the case of a Levi-nondegenerate hypersurface in $\CC{2}$. In this case the Segre family is a 2-parameter anti-holomorphic family of  holomorphic curves. It then follows from standard ODE theory that there exists an unique ODE \eqref{wzz}, for which the Segre varieties are precisely the graphs of solutions. This ODE is called \it the associated ODE. \rm

In the general case, both right hand sides in \eqref{wzz},\eqref{wzkzl} appear as functions determining the $2$-jet of a Segre variety as an analytic function of the $1$-jet. More explicitly, we denote the coordinates in $\CC{n+1}$ by 
$$
	(z,w)=(z_1, \ldots, z_n,w).
$$ 
Let then fix $M\subset\CC{n+1}$ to be a smooth real-analytic
hypersurface, passing through the origin, and choose a small neighborhood $U$
 of the origin. In this case
we associate a complete second order system of holomorphic PDEs to $M$,
which is uniquely determined by the condition that the differential equations are satisfied by all the
graphing functions $h(z,\zeta) = w(z)$ of the
Segre family $\{Q_\zeta\}_{\zeta\in U}$ of $M$ in a
neighbourhood of the origin.
To be more explicit we consider the
so-called {\em  complex defining
 equation } (see, e.g., \cite{ber})\,
$w=\rho(z,\bar z,\bar w)$ \, of $M$ near the origin, which one
obtains by substituting $u=\frac{1}{2}(w+\bar
w),\,v=\frac{1}{2i}(w-\bar w)$ into the real defining equation and
applying the holomorphic implicit function theorem.
 The Segre
variety $Q_p$ of a point 
$$x=(a,b)\in U,\,a\in\CC{n},\,b\in\CC{}$$ 
is  now given
as the graph
\begin{equation} \Label{segredf}w (z)=\rho(z,\bar a,\bar b). \end{equation}
Differentiating \eqref{segredf} we obtain
\begin{equation}\Label{segreder} 
	w_{z_j}=\rho_{z_j}(z,\bar a,\bar b),
	\quad
	j=1,\ldots,n. 
\end{equation}
Considering \eqref{segredf} and \eqref{segreder}  as a holomorphic
system of equations with the unknowns $\bar a,\bar b$, 
in view the Levi-nondegeneracy of $M$,
an
application of the implicit function theorem yields holomorphic functions
 $A_1,...,A_n, B$ such that
 \eqref{segredf} and \eqref{segreder} are solved by
$$
	\bar a_j=A_j(z,w,w'),\quad
	\bar b=B(z,w,w'),
$$
where we write
$$
	w' = (w_{z_1},  \ldots, w_{z_n}).
$$
The implicit function theorem applies here because the
Jacobian of the system coincides with the Levi determinant of $M$
for $(z,w)\in M$ (\cite{ber}). Differentiating \eqref{segredf} twice
and substituting the above solution for $\bar a,\bar b$ finally
yields
\begin{equation}\Label{segreder2}
w_{z_kz_l}=\rho_{z_kz_l}(z,A(z,w,w'),
B(z,w,w'))=:\Phi_{kl}(z,w,w'),
\quad
k,l=1, \ldots, n,
\end{equation}
or, more invariantly,
\begin{equation}\Label{segreder2'}
	j^2_{(z,w)} Q_x = \Phi(x, j^1_{(z,w)} Q_x).
\end{equation}
Now \eqref{segreder2} is the desired complete system of holomorphic second order PDEs
denoted by $\mathcal E = \mathcal{E}(M)$.
 \begin{definition}\Label{PDEdef}
 We call $\mathcal E = \mathcal{E}(M)$  \it the system of PDEs 
 associated with $M$. \rm  
  We also regard the collection   $\{\Phi_{ij}\}_{i,j=1}^n$ 
  as {\em the PDE system defining the CR structure} of a Levi-nondegenerate hypersurface $M$.
\end{definition}

\subsection{The necessity of Condition E}
We now explain the necessity of Condition E for the existence of a  smooth CR-diffeomorphism $F$ of $(M,0)$ onto a real-analytic germ $(\tilde M,0)$. Indeed, given such a CR-diffeomorphism $F$ of $(M,0)$ onto $(\tilde M,0)$, we may consider the section $\tilde \Phi$, as in \eqref{segreder2'}, associated with $(\tilde M,0)$. Clearly, Condition E is satisfied by $(\tilde M,0)$ since $\tilde \Phi$, considered as a function on the $1$-jet bundle $J^{1,n}$, already gives the holomorphic extension required in Condition E. Further, we note that the CR-diffeomorphism $F$ extends holomorphically to the pseudoconvex side $U^+$. The latter extension lifts naturally to a fiber-preserving map $\widehat F$ of the pseudoconvex neighborhood $\widehat U^+$ of $\widehat M$ into $J^{1,n}$ which is smooth up to $\widehat M_J$ ($\widehat F$ is the {\em $1$-jet prolongation} of the extension of $F$, see e.g. \cite{cs}). Now, since $F$ transforms formal complexifications of $M,\tilde M$ respectively onto each other, we conclude that the $2$-jet prolongation of the extension of $F^{-1}$ transforms $\tilde \Phi$  into the desired holomorphic extension $\Phi$, as required.

\qed

\section{The sufficiency of Condition E}

In this section, we consider a smooth strictly pseudoconvex hypersurface $M\subset\CC{n+1},\,n\geq 1$, defined near the point $0\in M$ and satisfying condition E. We shall prove that $M$ is CR-diffeomorphic (locally near $0$) to a real-analytic hypersurface $\tilde M\subset\CC{n+1}$. 

\subsection{Segre foliation in the space of $1$-jets} 
We start by recalling that the affine subset
$$E\simeq\CC{n+1}\times\CC{n}$$ of the 
bundle $J^{1,n}\to \CC{n+1}$
of $1$-jets of complex hypersurfaces
 is endowed with the canonical
 (up to a scalar function multiple) 
 $1$-form $$\omega_0:=dw-\sum_{1}^n\xi_jdz_j.$$ Here $(z_1,...,z_n,w)=(z,w)$ denote the coordinates in $\CC{n+1}=\CC{n}\times\CC{}$, and $\xi=(\xi_1,...,\xi_n)$ are the respective ``jet''-variables corresponding to the derivatives $w_{z_1},...,w_{z_n}$ respectively. The restriction to $E$ of the canonical projection 
 $\pi:\,J^{1,n}\mapsto \CC{n+1}$ then becomes
\begin{equation}\Label{PI}
\pi:\,\,(z,w,\xi)\mapsto (z,w)
\end{equation} 
and $E$ consists precisely
of the $1$-jets of hypersurfaces
that project submersively onto
 $\CC{n}\times\{0\} \subset \CC{n+1}$. 
 
 The main use of the canonical form $\omega_0$ here is the following. The (complex) tangent bundle $TS$ of a complex hypersurface $S\subset\CC{n+1}$ given as a graph of a function $w=w(z)$ allows to naturally lift $S$ to a complex $n$-dimensional submanifold of $E$. Then an $n$-dimensional submanifold $\tilde S\subset E$ of the kind $$w=w(z),\,\xi=\xi(z)$$ is a lifting in the above sense of a complex hypersurface $S\subset\CC{n+1}$ if and only if $\omega_0|_{\tilde S}=0$.

Further, we observe that, in the case of a {\em real-analytic} hypersurface $M\subset\CC{n+1}$, 
the associated system \eqref{wzkzl} amounts to an integrable holomorphic 
$n$-distribution in $E$ given by the condition:
\begin{equation}\Label{distrib}
\omega=(\omega_0,\omega_1,...,\omega_n)=0,
\end{equation}
where 
\begin{equation}\Label{omegak}
\omega_k:=d\xi_k-\sum_1^n\Phi_{kl}dz_l,\,\,k=1,...,n
\end{equation} 
(in the sense that the leaves of the foliation $\mathcal F$ determined by \eqref{distrib} are precisely the lifts to $E$ of graphs of solutions for \eqref{wzkzl}).  

In accordance with the latter observation, let us denote the smooth symmetric matrix function \eqref{Phi} (or respectively \eqref{Phi1}) associated with a smooth hypersurface $M$ satisfying Condition E by $\Phi$, and consider the associated complex valued differential $1$-forms $\omega_1,...,\omega_n$, defined by \eqref{omegak}. 
In view of Condition E, the function $\Phi$ and hence all the $1$-forms $\omega_0,\omega_1,...,\omega_n$ extend holomorphically to the pseudoconvex side $\widehat U^+$ of $\widehat M$, defining there a holomorphic $n$-distribution $D$.  Alternatively, $D$ is spanned by the $n$ holomorphic vector fields 
\begin{equation}\Label{Lj}
L_j:=\frac{\partial}{\partial z_j}+\xi_j\frac{\partial}{\partial w}+\sum_1^n \Phi_{sj}\frac{\partial}{\partial \xi_s}.
\end{equation}

We then have

\begin{proposition}\Label{integrable}
The distribution $D$ in $\widehat U^+$ is integrable.
\end{proposition}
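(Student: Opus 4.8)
The plan is to verify the Frobenius condition directly for the holomorphic frame $L_1,\dots,L_n$ spanning $D$, reduce it to a system of holomorphic identities on the coefficients $\Phi_{kl}$, and then establish those identities by matching the holomorphic extension to the \emph{formally integrable} associated PDE system along the totally real manifold $M_J$. First I would compute the brackets. Using $L_j=\frac{\partial}{\partial z_j}+\xi_j\frac{\partial}{\partial w}+\sum_s\Phi_{sj}\frac{\partial}{\partial \xi_s}$ together with the symmetry $\Phi_{sj}=\Phi_{js}$, the $\frac{\partial}{\partial z}$–components cancel and the $\frac{\partial}{\partial w}$–coefficient equals $L_i\xi_j-L_j\xi_i=\Phi_{ji}-\Phi_{ij}=0$, leaving
$$[L_i,L_j]=\sum_{t=1}^n\bigl(L_i\Phi_{tj}-L_j\Phi_{ti}\bigr)\frac{\partial}{\partial \xi_t}.$$
Since the $\frac{\partial}{\partial z_k}$–components of $L_1,\dots,L_n$ are linearly independent while the bracket has none, membership $[L_i,L_j]\in D$ is equivalent to its vanishing. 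Thus integrability of $D$ is equivalent to the holomorphic identities
$$G_{tij}:=L_i\Phi_{tj}-L_j\Phi_{ti}\equiv 0,\qquad t,i,j=1,\dots,n,$$
on $\widehat U^+$, where by Condition E each $G_{tij}$ is holomorphic on $\widehat U^+$ and smooth up to $\widehat M$.

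Next I would invoke the formal integrability of the associated system. Carrying out the construction of Section 3 purely formally from the complexified Taylor series of $\rho$ (legitimate since $M$ is merely smooth), one obtains formal power-series coefficients satisfying, along each formal Segre graph $w=h(z)$, the identity $h_{z_kz_l}=\Phi_{kl}(z,h,h')$ to infinite order. Differentiating this in $z_i$ and substituting $h_{z_i}=\xi_i$, $h_{z_sz_i}=\Phi_{si}$ converts $\frac{\partial}{\partial z_i}$ into $L_i$, so that $h_{z_kz_lz_i}=L_i\Phi_{kl}$ along the lift; the symmetry $h_{z_kz_lz_i}=h_{z_kz_iz_l}$ of third-order derivatives then gives $G_{tij}=0$ along these lifts. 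By Levi-nondegeneracy the Jacobian of the Segre construction is the Levi determinant, so the $1$-jets of the Segre graphs fill a full formal neighborhood and $G_{tij}$ vanishes formally. To transfer this to the genuine extension I would use that $M_J$ is maximally totally real in $J^{1,n}$ (so the $\infty$-jet of a holomorphic function at a point of $M_J$ is determined by its restriction to $M_J$): the holomorphic $\Phi$ and the formal associated-PDE $\Phi$ restrict to the \emph{same} smooth function on $M_J$ (both record $j^2_qS_q$ as $q$ varies), hence have equal $\infty$-jets along $M_J$, and therefore $G_{tij}$ vanishes to infinite order along $M_J$.

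Finally I would close by a boundary uniqueness argument: a function holomorphic on $\widehat U^+$, smooth up to $\widehat M$, and vanishing on the maximally totally real submanifold $M_J\subset\widehat M$ must vanish identically. In local holomorphic coordinates straightening $M_J$ to a totally real linear subspace inside $\widehat M$, one slices $\widehat U^+$ by complex lines meeting $M_J$ along real segments; on each slice $G_{tij}$ is a one-variable holomorphic function with vanishing boundary values on a real interval, so Schwarz reflection forces it to vanish, and these slices sweep out $\widehat U^+$. This yields $G_{tij}\equiv 0$ and hence the integrability of $D$. The main obstacle is precisely this interface between the formal and holomorphic pictures: the formal Segre graphs of a strictly pseudoconvex $M$ lie on the pseudoconcave side, opposite to where Condition E furnishes the extension, so the delicate point is to justify that the holomorphically extended $\Phi$ shares the $\infty$-jet of the formally integrable associated system along $M_J$, and then to upgrade the resulting boundary vanishing to identical vanishing.
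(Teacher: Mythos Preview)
Your argument is correct and follows essentially the same route as the paper's proof: reduce Frobenius integrability to the identities $L_j\Phi_{kl}-L_k\Phi_{jl}=0$, identify their restriction to $M_J$ with the symmetry of third-order jets of the formal Segre varieties, and then use that $M_J$ is maximally totally real in $J^{1,n}$ to propagate the vanishing to all of $\widehat U^+$. You have simply supplied more detail at each step --- the explicit bracket computation, the matching of jets along $M_J$, and the one-variable Schwarz-reflection slicing for the boundary uniqueness --- where the paper is terse.

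One remark on the ``main obstacle'' you flag at the end: it is not really an obstacle. By the very definition of Condition~E, the boundary data of the holomorphic extension on $M_J$ \emph{are} the formal $2$-jets $j^2_qS_q$; there is no separate ``formal $\Phi$'' to be matched. Since $M_J$ is totally real of maximal real dimension, the first holomorphic jet of $\Phi$ at any point of $M_J$ is already determined by the tangential derivatives of $\Phi|_{M_J}$, and those encode exactly the third-order jet of the formal Segre variety. So the vanishing of $G_{tij}$ on $M_J$ follows directly, and the infinite-order vanishing you establish, while correct, is more than is needed for the slicing argument. The worry that the formal Segre graphs live on the pseudoconcave side is irrelevant here because only their jets at base points $q\in M$ enter the computation.
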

\begin{proof}
Integrability of the distribution $D$ amounts to the conditions
\begin{equation}\Label{integrability}
L_j\Phi_{kl}-L_k\Phi_{jl}=0,\quad j,k,l=1,...,n, 
\end{equation}
where $L_j$ are as in \eqref{Lj}.
(In terms of the system \eqref{wzkzl}, conditions \eqref{integrability} mean simply the symmetry of third order derivatives of $w$ in all indices). In view of the Condition E, the left hand side in \eqref{integrability} extends smoothly to the real hypersurface $\widehat M\subset E$. We claim that the latter extension vanishes on the totally real manifold $M_J$. Indeed, the fact of vanishing of the left hand side in \eqref{integrability} when restricted on $M_J$ is nothing but the symmetry of the third order jet of formal Segre varieties of $M$ in all indices, which proves the claim. Since $M_J$ is totally real of dimension $2n+1$, this implies that the left hand side in \eqref{integrability} vanishes identically in $\widehat U^+\cup \widehat M$, as required.

\end{proof}

\autoref{integrable} implies the existence of an $n$-dimensional holomorphic foliation $\mathcal F$ in $\widehat U^+$ generated by $D$. We specify that by {\em leaves} of the foliation $\mathcal F$ we mean maximal connected components of integral submanifolds of $\mathcal F$.
\begin{definition} In what follows we call $\mathcal F$ 
{\em the Segre type foliation in $\widehat U^+$.}
\end{definition}

\subsection{Changing the complex structure on the pseudoconcave side of $M$}

In this section, we show that the pseudoconcave side $U^-$ of $M$ can be interpreted as the space of leaves for the Segre foliation $\mathcal F$ constructed above, and this endows $U^-$ with a {\em different} (integrable) complex structure, which is smooth up to $M$ and which induces on $M$ a CR-structure coincident with the initially given CR-structure on $M$ induced from $\CC{n+1}$.  
We construct the desired complex structures 
in multiple steps discussed in detail below.

\medskip

{\noindent \bf Step I.} We recall that the distribution $D$ above can be, according to Condition E, smoothly extended to $\widehat U^+\cup\widehat M$ as a function valued in the complex Grassmannian  $\mbox{Gr}(n,E)$. (Note though that this extension is {\em not} everywhere tangent to  $\widehat M$!). Furthermore, we note that $M_J$ is precisely the locus of points in $\widehat M$, for which the value of the extension of $D$ {\em is tangent} (and hence complex tangent) to $\widehat M$. This follows directly from the construction of $M_J,\widehat M$.


\medskip

{\noindent \bf Step II.} 
Our next goal is to show that the space of leaves of the Segre foliation 
$\mathcal F$ is a  smooth manifold in its natural (quotient) topology, which can be furthermore extended to a smooth manifold with boundary $M_J$.

We will make use of the following
\begin{proposition}\Label{extension}
Let $U\subset\RR{m}$ be a neighborhood of the origin and $M\subset U$ a smooth strictly convex hypersurface through the origin and, furtheremore, one has $T_0M=\{x_m=0\}$ and the 
second fundamental form of $M$ at $0$ equals to 
\begin{equation}\Label{ksum}
dx_1^2+...+dx_{k}^2
\end{equation} for some $1\leq k <m$. Let $U^+$ be the convex side of $M$, $U^-$ the concave side of $M$, and $D$ a smooth $k$-dimensional integrable distribution in $U^+$. Assume that 

\smallskip 

(i) $D$ extends to $M$ smoothly (as a function valued in $\mbox{Gr}(k,\RR{m})$);

\smallskip

(ii) The $k$-plane $D_0$ at $0$  is spanned by $\frac{\partial}{\partial x_1},...,\frac{\partial}{\partial x_k}$ (in particular, $D_0\subset T_0 M$) and, moreover, the $\frac{\partial}{\partial x_{k+1}},...,\frac{\partial}{\partial x_{m}}$ components of 
some collection of vector fields spanning $D$ have zero linear parts 
at the origin. 

\smallskip

\noindent Then, after possibly changing the neighborhood $U$, the distribution $D$ extends to a smooth integrable distribution in $U$. Furthermore, if $D$ is given by
smooth up to $M$,  (pointwise) linearly independent and commuting vector fields $X_1,..,X_k$  in $U^+$, then these vector fields can be extended smoothly to $U$ in such a way that the extensions still commute. The foliation $\mathcal F$ in $U^+$ generated by $D$ extends 
therefore to a smooth $k$-dimensional foliation $\mathcal F'$ in $U$, in the sense that 
each leaf of $\mathcal F$ is an open subset of a unique leaf of $\mathcal F'$ and each intersection of a leaf of $\mathcal F'$ with $U^+$ is connected.
\end{proposition}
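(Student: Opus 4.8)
The plan is to extend not the distribution $D$ directly, but a complete set of first integrals, so that integrability of the extension becomes automatic. On $U^+$ the distribution $D$ is integrable, so by the Frobenius theorem there is a submersion $t=(t_{k+1},\dots,t_m)\colon U^+\to\RR{m-k}$ whose fibers are the leaves of $\mathcal F$. If I can show that $t$ extends to a smooth \emph{ambient} submersion on $U^+\cup M$ near $0$, then I extend it across $M$ by any smooth (Seeley-type) extension: being a submersion is an open condition that already holds on $M$, so the extension $\tilde t$ remains a submersion in a full neighborhood $U$ of $0$, and the fibers of $\tilde t$ furnish the desired smooth integrable foliation $\mathcal F'$ extending $\mathcal F$, with $D$ extended as $\ker d\tilde t$. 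Integrability is then free, since the level sets of a submersion form a foliation.

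First I would record the local geometry forced by the hypotheses. Writing $M=\{x_m=\tfrac12(x_1^2+\dots+x_k^2)+O(|x'|^3)\}$ with $U^\pm=\{\pm(x_m-\tfrac12(x_1^2+\dots+x_k^2)-\dots)>0\}$, and using (ii) to write the spanning fields as $X_i=\partial_{x_i}+\sum_{j\le k}O(|x|)\,\partial_{x_j}+\sum_{l>k}O(|x|^2)\,\partial_{x_l}$, the restriction of the second fundamental form to $D_0$ is the positive-definite form $dx_1^2+\dots+dx_k^2$. I would deduce that the tangency locus $T=\{p\in M:D_p\subset T_pM\}$ is, after shrinking $U$, a smooth $(m-1-k)$-dimensional submanifold through $0$, that $D$ is transverse to $M$ off $T$, and that near $T$ the leaves meet $M$ in small spheres shrinking to points of $T$ — exactly the model ``cap'' picture in which $t=(x_{k+1},\dots,x_m)$ is an ambient submersion up to $M$ whose fibers in $\{x_m\ge\dots\}$ are the caps. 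Off $T$ the claimed extension of $t$ is immediate, since there the leaves cross $M$ transversally and $t$ extends smoothly to both sides.

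The main obstacle is to prove that $t$ is smooth up to $M$ along the tangency locus $T$, where the foliation becomes tangent to $M$ and the leaves degenerate; this is precisely where the strict convexity (positive definiteness of the second fundamental form on $D_0$) is used. I would establish it by putting $(M,\mathcal F)$ into a normal form near $T$: parametrizing the leaves by the flows of $X_1,\dots,X_k$ and applying a parametrized Morse lemma to the leafwise restriction of the defining function of $M$ — which by strict convexity has, on each nearby leaf, a nondegenerate maximum depending smoothly on the leaf — I would produce coordinates in which $(M,\mathcal F)$ agrees with the model. Reading $t$ off the normal form shows that $t$, and hence $D=\ker dt$, extend smoothly across $M$; this is the technical heart of the argument.

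Finally, for the \emph{furthermore} statement I would straighten $\mathcal F'$ on $U$ to the coordinate planes $\{\tilde t=\mathrm{const}\}$. In these coordinates commuting fields tangent to the leaves amount to a choice of leafwise-flat coordinates; these are given on the part $U^+\cup M$ of each leaf and extend smoothly (again by a Seeley-type extension of the flat-coordinate functions, using that each leaf of $\mathcal F$ is connected) to the whole leaf so as to remain commuting. Transporting back yields commuting extensions of $X_1,\dots,X_k$. The asserted properties of $\mathcal F'$ — that each leaf of $\mathcal F$ is open in a unique leaf of $\mathcal F'$ and that each $\mathcal F'$-leaf meets $U^+$ in a connected set — then follow from the construction, since the $U^+$-part of a fiber of $\tilde t$ is exactly the superlevel set of a nondegenerate maximum on that leaf, namely a cap.
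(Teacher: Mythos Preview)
Your strategy --- extending a complete set of first integrals $t$ rather than the spanning vector fields --- is a genuinely different route from the paper's proof, which proceeds by \emph{induction on $k$}: it first shows that a commuting frame $X_1,\dots,X_k$ can be chosen smooth up to $M$ (via the standard Frobenius normalization $X_i=\partial_{x_i}+\sum_{l>k}a_{il}\partial_{x_l}$), then inductively extends $X_1,\dots,X_{k-1}$ to commuting fields on all of $U$, straightens them to $\partial_{x_1},\dots,\partial_{x_{k-1}}$, and observes that the remaining field $X_k$, commuting with these on $\overline{U^+}$, has coefficients depending only on $(x_k,\dots,x_m)$; since each straightened $(k-1)$-leaf meets $\overline{U^+}$ in a connected set (induction hypothesis), $X_k$ descends to a smooth function on the projection of $\overline{U^+}$ to the $(x_k,\dots,x_m)$-plane and is then Seeley-extended in those fewer variables. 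Connected intersection of the resulting $k$-leaves with $U^+$ is checked directly from the second-order tangency condition (ii), exactly as in the base case $k=1$.

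Your approach could plausibly be completed, but as written there is a real gap at the tangency locus. The parametrized Morse step needs a smooth parametrization of the leaves in a \emph{full} neighborhood of a tangency point, not just on the $U^+$-side; you propose to obtain it from the flows of $X_1,\dots,X_k$, but a Seeley extension of the $X_i$ across $M$ does not preserve the commutation relations off $\overline{U^+}$. Consequently the iterated-flow map $\Phi(s,c)$ no longer produces integral manifolds of $D$: for points $p=\Phi(s,c)\in U^+$ reached by a flow path that has passed through $U^-$, the tangent plane $T_pL_c$ need not equal $D_p$, and there is no reason the $c$-coordinates restrict to first integrals of $\mathcal F$ on all of $U^+$ (only on the subset reachable from the transversal by flows staying in $U^+$). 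Near the tangency locus every leaf of $\mathcal F$ is such a ``cap'', and precisely there the flow path is forced to leave $U^+$, so this is not a peripheral case. Repairing this circularity seems to require exactly the inductive extend-and-straighten mechanism the paper uses; once commuting extensions of the $X_i$ are in hand, your Morse picture and Seeley extension of $t$ go through cleanly, and your leafwise nondegenerate-maximum argument for connectedness is a tidy repackaging of the paper's convexity argument.
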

\begin{proof}
We prove the proposition by induction. 

For $k=1$, we choose a vector field $X$ generating $\mathcal F$. We split $x=(x_1,\tilde x)$. In view of (ii), we may assume 
\begin{equation}\Label{conv}
X=p\frac{\partial}{\partial x_1}+q\frac{\partial}{\partial \tilde x},\quad p(0)=1,\,\,q(0)=0,\,\,\frac{\partial q}{\partial x_1}=0.
\end{equation}
Consider a smooth extension  $X'$ of $X$ to $U$ (possible by (i)). Then \eqref{conv} immediately implies that the orbit of $X'$ at $0$ has the form 
$$x_1=t,\quad \tilde x=O(t^3), \quad t\in(-\epsilon,\epsilon).$$ 
Now if, for example, $m=2$ (so that $\tilde x=x_2$), we consider the defining equation $x_2=\psi(x_1)$ of $M$ as well as the definining equation $x_2=\phi_0(x_1)$ of the orbit. Then \eqref{ksum} implies that $(\psi-\phi)''>0$ on $(-\epsilon,\epsilon)$ for small elough $\epsilon$,  that is why the  part of the orbit lying inside $U^+=\{x_2>\psi\}$ is the set of negative values of a convex function, i.e. the intersection of the orbit with $U^+$ is {\em connected}. By continuity, after shrinking possibly $U$, the same argument applies for all points nearby $0$, and this proves the proposition for $k=1$ and $m=2$. The case $k=1$ and $m>2$ can be reduced to the previous one by considering the intersection with a $2$-dimensional surface containing the orbit and the $x_m$ coordinate axis. We leave the details to the reader.

We now proceed with the induction step. 
We choose $k$ linearly independent smooth vector fields $X_1,...,X_{k}$ spanning the distribution $D$ in $U^+$ and extending smoothly to $M$. Following a proof of the Frobenius theorem (e.g. \cite{book}), it is not difficult to show that we can choose $X_1,...,X_k$ to be furthermore {\em commuting}. Indeed, we have:
$$X_i=\sum_{j=1}^m \alpha_{ij}\frac{\partial}{\partial x_j}$$
for smooth up to $M$ functions $\alpha_{ij}$ in $U^+$
($(x_1,...,x_m)$ are the coordinates in $\RR{m}$). Since $X_i$ are linearly independent, we can assume without loss of generality that the matrix 
$$\left(\alpha_{ij}\right)_{i,j=1}^k$$
is invertible in $U^+\cup M$, and consider the inverse smooth matrix $(\beta_{ij})$. Then it is straightforward to check that the vector fields 
$$\sum_{j=1}^k\beta_{ij}X_j,\quad 1\leq i\leq k,$$ obviously spanning the same distribution in $U^+$ and smooth up to $M$, in fact in addition {\em commute}. 

The latter allows us to consider the {\em integrable} distribution generated by the commuting vector fields $X_1,...,X_{k-1}$. Applying then the induction assumption, we obtain smooth {\em commuting} extensions $X_1',..,X_{k-1}'$ of $X_1,..,X_{k-1}$ and hence a $(k-1)$-dimensional integrable distribution  in $U$. The foliation $\mathcal X$ given by this distribution has the property that its  leaves can have only connected intersections with $U^+$.     After that, let us perform (after possibly shrinking $U$) a smooth in $U$ diffeomorphism, 
tangent to the identity at $0$ and
transforming the vector fields $X_1',...,X_{k-1}'$ to $\frac{\partial}{\partial x_1},...,\frac{\partial}{\partial x_{k-1}}$ respectively (the latter is possible since the vector fields commute). Now all the leaves of $\mathcal X$ become parallel to the $(x_1,...,x_{k-1})$-plane. We keep the same notation for $U,U^\pm$ in the new coordinates. 
Then the vector field $X_k$ (defined so far in the closure of $U^+$) 
 have the form
\begin{equation}\Label{YY}
X_k=a_1(x_k,...,x_m)\frac{\partial}{\partial x_1}+\cdots+a_m(x_k,...,x_m)\frac{\partial}{\partial x_m}
\end{equation} 
 where
we use the commutativity $[X_j,X_k]=0,\,j=1,...,k-1$). 

Consider the orthogonal projection $\Omega$ of the closure of $U^+$   onto the $(x_k,...,x_m)$-plane. Then the leaves of $\mathcal X$ intersecting the closure of $U^+$ project onto a single point in $\Omega$.  As follows from \eqref{YY}, the vector field $X_k$ is {\em constant} on each leaf of $\mathcal X$, which allows to extend $X_k$ constantly along each leaf intersecting the closure of $U^+$.  In view of the above, this gives a smooth function on $\Omega$, which we first extend smoothly to a neighborhood of the origin in the $(x_k,...,x_m)$-plane,  and then again constantly along each leaf of $\mathcal X$.
Since the intersection of each leaf of $\mathcal X$ with the closure of $U^+$ is connected, the extension obtained is well defined. 
We thus are able to extend $X_k$ smoothly to a full neighborhood of the origin still being constant on each leaf of $\mathcal X$.
 In view of the latter property, the extended vector field $X_k'$ also
 satisfies $[X_j',X_k']=0,\,1\leq j\leq k-1$.

In summary, we obtain an integrable distribution in $U$ spanned by $X_1',...,X_k'$. For the respective foliation $\mathcal F'$, each leaf of $\mathcal F$ is clearly contained in that of $\mathcal F'$. It remains to show that intersections of leaves of $\mathcal F'$ with $U^+$ are connected. This however can be seen from (ii) by an argument identical to the one in the $1$-dimensional case. 
The proof is complete.
\end{proof}

We now apply \autoref{extension} to the situation of the
strictly pseudoconvex hypersurface $\widehat M\subset E$, its neighborhood $\widehat U$ and the distribution $D$ (considered as a real distribution).  For that, we perform a biholomorphic (in fact polynomial) coordinate change mapping the origin and the tangent plane $\im w=0$ onto themselves, and removing holomorphic quadratic terms from the formal Taylor expansion of $M$ in the origin
such that property \eqref{ksum} holds for $\widehat M$.    
In fact, in such coordinates $M$ becomes approximated by a quadric to order $2$ at the origin: 
\begin{equation}\Label{approx}
\im w=Q(z,\bar z)+O(3),
\end{equation}
where $Q$ is a positive definite Hermitian form (we assume it to be simply sum of squares) and $O(3)$ stand for terms of degree $3$ and higher. In case $M$ is already this quadric itself, $D$ is spanned by the (real parts of) the vector fields  
$$L_j:=\frac{\partial}{\partial z_j}+\xi_j\frac{\partial}{\partial w},$$
so that (ii) is satisfied. However, terms of order $3$ and higher in \eqref{approx} do not effect (ii), and  
this finally shows that $\widehat M$ satisfies all the conditions of \autoref{extension}.

We end up with a smooth extension of the distribution $D$ to a full neighborhood of the origin in $E$ in such a way that it is still integrable and defines a foliation $\mathcal F'$ extending $\mathcal F$, in the sense that each leaf of $\mathcal F$ is an open subset of some leaf of $\mathcal F'$ and, furthermore, leaves of $\mathcal F'$ have only connected intersections with $\widehat U^+$ (so that each leaf of $\mathcal F$ is contained in exactly one leaf of $\mathcal F'$).

\bigskip

{\noindent \bf Step III.} 
Based of the outcome of Step II, we are finally able 
to endow the space of leaves of $\mathcal F$
with the structure of a smooth $(2n+2)$-manifold with boundary $M_J$ in the natural (quotient) topology. 
Indeed, first note that the tangent plane at $0$ to the leaf of $\mathcal F'$ through $0$ is $w=0,\,\xi_j=0$ (as follows from the definition and the initial normalization of $M$). Hence, the $(2n+2)$-plane 
\begin{equation}\Label{z0}
\{z=0\}
\end{equation}  
has the property that all the leaves of (the ambient foliation) $\mathcal F'$ intersect it transversally at single points 
(after possibly shrinking $\widehat U$). 
Thus the space of leaves of $\mathcal F'$ can be identified with a domain in \eqref{z0} (viewed as $\RR{2n+2}$). Accordingly, the space of leaves of $\mathcal F$ is an open connected subset $G$ of the latter domain (since it is given by the condition of having nonempty intersection with the domain $\widehat U^+$). 
This gives a structure of a smooth manifold on the space of leaves of $\mathcal F$ (in its natural quotient topology)! 

It remains to show that the space of leaves of  $\mathcal F$ has, furthermore, a structure of a smooth $(2n+2)$-manifold with a boundary. 
Indeed, as follows from the discussion in Steps I and II, any leaf of $\mathcal F'$ intersecting the closure of $\widehat U^+$  either intersects $\widehat M$ transversally and consequently intersects the open part $\widehat U^+$, or  intersects the boundary $\widehat M$ only at a point in $M_J$, or is contained in $\widehat U^+$ (the last possibility in fact does not occur, but we do not need this fact). In this way, {\em $M_J$ can be identified with the subset of leaves of $\mathcal F'$   intersecting the boundary $\widehat M$ but not the open part $\widehat U^+$}. Thus, we are looking for the set of leaves of $\mathcal F'$ which are tangent at some point to $\widehat M$. The latter set is a smooth $(2n+1)$-submanifold in $\RR{2n+2}$ (which is in fact the boundary of the above open set $G$ of leaves of $\mathcal F'$ intersecting $\widehat U^+$). Indeed, perform a local diffeomorphism (near the origin in $E\sim\RR{4n+2}$) with the identity linear part in such a way that in the new local coordinates the leaves become "horizontal" $2n$-planes (that is, they are given by $$x_j=c_j,\,j=2n+1,...,4n+2,$$ where $c_j$ are constant), and the hypersurface $\widehat M$ becomes
$$
	x_{4n+2}=\varphi(x_1, \ldots ,x_{4n+1})
$$ 
for a smooth function $\varphi$. Note that, in particular, $\varphi$ has nondegenerate at the origin Hessian in $x_1,...,x_{2n}$ (as follows, for example, from \eqref{approx}).  Now the condition that a leaf is tangent to $\widehat M$ at some point is:
\begin{equation}\Label{tang}
c_{4n+2}=\varphi(x_1,...,x_{2n},c_{2n+1},...,c_{4n+1}),\quad \varphi_{x_j}(x_1,...,x_{2n},c_{2n+1},...,c_{4n+1})=0,\,\, j=1,..,2n.
\end{equation}       
Solving the last $2n$ equations of \eqref{tang} in $x_1,...,x_{2n}$ by the implicit function theorem and substituting the result into the first equation of \eqref{tang}, we obtain a smooth hypersurface 
$$c_{4n+2}=\psi(c_{2n+1}, \ldots ,c_{4n+1})$$
in $\RR{2n+2}$ (endowed with the coordinates $c_{2n+1},...,c_{4n+1}$) for an appropriate smooth function $\psi$ with $\psi(0)=d\psi(0)=0$, as desired.

\bigskip

We thus have proved the following

\begin{proposition}\Label{manifold}
The leaf space of the foliation $\mathcal F$ is a smooth $(2n+2)$-manifold in its natural (quotient) topology. Furthermore, it can be regarded as a smooth $(2n+2)$-manifold with boundary $M_J$. The (germ at the origin of) the upper half space 
$$\bar H=\{(x_{2n+1},...,x_{4n+2}):\,\,x_{4n+2}\geq 0\}$$ serves as a coordinate chart for it. 
\end{proposition}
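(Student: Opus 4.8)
The plan is to read off the statement directly from the constructions carried out in Steps I--III, assembling them into a single argument. First I would invoke \autoref{extension}, applied (as already arranged) to the strictly convex real hypersurface $\widehat M\subset E\simeq\RR{4n+2}$ and to the real distribution underlying the integrable $D$ of \autoref{integrable}. This yields a smooth integrable extension $\mathcal F'$ of $\mathcal F$ to a full neighborhood of the origin in $E$, with the two properties that matter: each leaf of $\mathcal F$ is an open subset of exactly one leaf of $\mathcal F'$, and each leaf of $\mathcal F'$ meets $\widehat U^+$ in a connected set. This reduces the analysis of the quotient of $\mathcal F$ to that of the ambient foliation $\mathcal F'$, for which a global transversal is available.

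Next I would produce that transversal. Since the leaf of $\mathcal F'$ through $0$ is tangent to $\{w=0,\ \xi=0\}$ — a consequence of the normalization \eqref{approx} of $M$ and of the form of the spanning fields $L_j$ — the affine $(2n+2)$-plane $\{z=0\}$ of \eqref{z0} is transversal to every nearby leaf of $\mathcal F'$ after shrinking $\widehat U$, and each such leaf meets it in a single point. Identifying $\{z=0\}$ with $\RR{2n+2}$ then furnishes a smooth chart realizing the leaf space of $\mathcal F'$ as a domain in $\RR{2n+2}$, and the leaf space of $\mathcal F$ is the open connected subset $G$ of leaves whose intersection with $\widehat U^+$ is nonempty. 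This already settles the smooth $(2n+2)$-manifold claim in the interior.

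The core of the argument is the boundary. Following Step III, $M_J$ is identified with the leaves of $\mathcal F'$ that touch $\widehat M$ without entering $\widehat U^+$, equivalently the leaves tangent to $\widehat M$ at some point, and I must show this set is a smooth $(2n+1)$-hypersurface bounding $G$. I would straighten the leaves of $\mathcal F'$ to horizontal $2n$-planes by a diffeomorphism with identity linear part and write $\widehat M$ as a graph $x_{4n+2}=\varphi(x_1,\dots,x_{4n+1})$; the tangency conditions \eqref{tang} then become $c_{4n+2}=\varphi$ together with $\varphi_{x_j}=0$ for $j\le 2n$. The decisive input is that the Hessian of $\varphi$ in $(x_1,\dots,x_{2n})$ is nondegenerate at the origin, which is exactly the strict convexity of $\widehat M$ recorded in \eqref{approx}. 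This lets me solve the $2n$ critical-point equations for $(x_1,\dots,x_{2n})$ by the implicit function theorem and substitute back, obtaining the boundary as a smooth graph $c_{4n+2}=\psi(c_{2n+1},\dots,c_{4n+1})$ with $\psi(0)=d\psi(0)=0$. Checking finally that $G$ lies on exactly one side of this graph shows that the germ at the origin of the closed upper half space $\bar H$ is a manifold-with-boundary chart whose boundary is $M_J$.

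The step I expect to be the main obstacle is precisely this boundary analysis: one must confirm that the tangency locus is a genuine smooth hypersurface of the correct dimension and that $G$ sits on a single side of it, so that the quotient is a manifold \emph{with boundary} rather than with a fold or corner. This rests entirely on the nondegeneracy of the transverse Hessian of $\varphi$ inherited from strict pseudoconvexity, together with the connectedness of leaf--$\widehat U^+$ intersections supplied by \autoref{extension}; jointly these exclude the pathological possibility of a leaf meeting $\widehat M$ tangentially yet still crossing into the pseudoconvex side.
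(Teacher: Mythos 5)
Your proposal follows essentially the same route as the paper: apply \autoref{extension} to extend $\mathcal F$ to an ambient foliation $\mathcal F'$ with connected leaf intersections with $\widehat U^+$, use the transversal $\{z=0\}$ to chart the leaf space of $\mathcal F'$ as a domain in $\RR{2n+2}$, identify $M_J$ with the tangency locus of leaves against $\widehat M$, and resolve that locus as a smooth graph $c_{4n+2}=\psi(c_{2n+1},\dots,c_{4n+1})$ via the nondegenerate transverse Hessian and the implicit function theorem. The argument is correct and matches the paper's Steps II--III, including the key points you single out (connectedness of leaf--$\widehat U^+$ intersections and the Hessian nondegeneracy coming from \eqref{approx}).
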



\medskip

{\noindent \bf Step IV.}
We denote the leave space from \autoref{manifold} by $\mathcal U$, and the respective manifold with boundary by $\overline{\mathcal U}$. As was mentioned above, $\mathcal U$ represents, in a certain sense, the pseudoconcave side $U^-$ endowed however with a {\em different} (still integrable) complex structure. 

We 
next note that the leave space $\mathcal U$ is, on the other hand, the quotient topological space $\widehat U^+/\mathcal F$, and this means (if read together with \autoref{manifold}) that $\mathcal U$ has also a structure of a {\em complex} manifold with boundary. We emphasize at this point that, exclusively for the purpose of obtaining the right complex structure on $\widehat U^+/\mathcal F$,

\smallskip

 {\em we shall change the complex structure on 
 $\mathcal U$  to its conjugate structure.} 

\smallskip 
 This is related to the antiholomorphic dependence of Segre varieties on their parameters, for a real-analytic hypersurface. Thus we obtain a  smooth integrable complex structure on $\mathcal U$, which extends further to a smooth CR-structure on $\partial\mathcal U$. The boundary $\partial\mathcal U$ is naturally diffeomorphic to $M_J$ and hence to $M$. We will show later that the two CR-structures on $M$ (the one coming from the quotient space and the one induced from the embedding into $\CC{n+1}$) in fact agree with each other together with all higher order derivatives.

\medskip

{\noindent \bf Step V.} 
We now have to take into consideration the ``one sided Segre varieties of $M$'', that is, images of leaves of the foliation $\mathcal F$ under the projection map $\pi$, as in \eqref{PI}. This gives us the family
$$\mathcal S^+:=\Bigl\{\pi(T)\Bigr\}_{T\in \mathcal F}.$$
Note that, as the extension construction in Step II above shows, all the leaves in $\mathcal F$ 
(after
possibly 
shrinking
the basic neighborhood $\widehat U$) are open subsets of graphs of smooth functions of the kind $w=w(z),\,\xi=\xi(z)$ with
 connected intersections with $\widehat U^+$. In this way, all elements of $\mathcal S^+$ are $n$-dimensional complex submanifolds in $U^+$, and we conclude that 
{\em $\mathcal S^+$ is an $(n+1)$-dimensional anti-holomorphic family 
of 
pairwise transverse 
complex $n$-dimensional submanifolds in $U^+$ of the form $w=w(z)$}. 
(Transversality here means that no two elements of $\mathcal S^+$ are tangent at a point $p\in U^+$). 
The anti-holomorphic parametrization of $\mathcal S^+$ here comes from the integrable complex structure on the space of leaves $\mathcal U$.  

We note that $\mathcal U$ itself is endowed with a natural $(n+1)$-dimensional anti-holomorphic family of $n$-dimensional complex submanifolds as follows. We fix a point $p\in U^+$ and consider the set $S_p$ of all the manifolds from $\mathcal S^+$ passing through $p$ as a subset of $\mathcal U$. Following the geometric interpretation in the real-analytic case, we call $S_p$ {\em the Segre variety of $p$}. The structure of $S_p$ becomes particularly clear when considering the foliation $\mathcal F$: then the set of all elements of $\mathcal S^+$ passing through $p$ lifts to the set of all fibers in $\mathcal F$ intersecting the fiber $\pi^{-1}(p)$ of the bundle $E$. In this way, we easily see, from the construction of the manifold $\mathcal U$,  that each $S_p$ can be identified via the $1$-jet map
with the fiber $\pi^{-1}(p)$ and thus is a complex $n$-dimensional submanifold in $\mathcal U$ (with respect to the above described complex structure on $\mathcal U$), as required. We denote the resulting family of submanifolds in $\mathcal U$ by $\mathcal S^-$ (it becomes an anti-holomorphic family parameterized by $U^+$). 

For completeness of the picture, we also call, for each $p\in\mathcal U$, the respective leaf $T\in \mathcal S^+$ its {\em Segre variety} and denote the latter one by $S_p$. We then obtain the following familiar symmetry property:
$$p\in S_q \Leftrightarrow q\in S_p, \quad p\in U^+,\,\,q\in \mathcal U.$$ 

\medskip

{\noindent \bf Step VI.} We recall that the boundary manifold $\partial\mathcal U$ is naturally diffeomorphic to the initial CR-manfold $M$, as follows from the construction of $\overline{\mathcal U}$.  We further extend (locally near $0$) the latter diffeomorphism smoothly to a diffeomorphism between the manidold with boundary $\overline{\mathcal U}$ and the pseudoconcave side $U^-$ of $M$ (which is possible since both are 
manifolds
of equal dimension
 with boundary). 
 We end up with a smooth manifold $U$ decomposed as a union of two manifolds with boundary:
$$U=(U^-\cup M)\cup (U^+\cup M),$$
where both $U^-$ and $U^+$ are endowed with their individual complex structures (for $U^-$ this is the integrable structure induced from $\mathcal U$ and for $U^+$ this is the standard complex structure induced from $\CC{n+1}$). Moreover, both structures admit a smooth extension to the boundary and  induce boundary CR-structures on $M$. Our goal is now to show that the two structures (considered for the moment as $(2n+2)\times (2n+2)$ matrices) agree on $M$ (together with all derivatives). In particular, they define a smooth structure in a full neighborhood of the origin, and the two induced CR-structures on $M$ coincide.   

For doing so, let us fix $p\in M$ and  the respective point $\tilde p\in\partial\mathcal U$. We observe the following: 
all 
data required for computing the boundary value at  $\tilde p$ of the complex structure on $\mathcal U$ comes from the $2$-jet of $M$ at $p$ (as follows from our construction). Similarly, for computing the $k$-jet at $p$ of the limit 
of the complex 
structure we just need to know the $(k+2)$-jet of $M$ at $p$. Since $M$ can be approximated to any order by a real-analytic hypersurface, we conclude that it suffies to show that for a {\em real-analytic} hypersurface $M$ the two above structures coincidence and define a real-analytic (in particular smooth) structure in a neighborhood of $p$. 

If now $M$ is real-analytic, then, as a well know fact (e.g. \cite{ber}), after choosing an appropriate neighborhood $U$ of $p$, we have the property that a Segre variety of a point $q\in U$ intersects the pseudoconvex side $U^+$ iff $q$ lies in the pseudoconcave side $U^-$ of $M$. Thus, if $M=\{\rho(Z,\bar Z)=0\}$ near $p$, then the above manifold $\mathcal U$ consists of Segre varieties 
$\{\rho(Z,\bar q)=0\}$ 
with $q\in U^-$, and thus  can be identified with $U^-$ 
with the standard complex structure on it, 
while its boundary  can be identified  with $M=\{\rho(q,\bar q)=0\}$ with the standard CR-structure on it. 
This immediately yields the desired property.

\medskip

\subsection{End of proof of the main result}

In this section, we complete the proof of the main result. 

\smallskip

Recall that, as an outcome of Step VI, we obtain a smooth manifold $U$ endowed with an {\em integrable} smooth complex structure $J$ (the integrability follows from that on both $U^-$ and $U^+$ and thus, by continuity, at points in $M$ as well). By the Newlander-Nirenberg theorem \cite{nirenberg}, there is a smooth diffeomorphism $\chi$ of $(U,J)$ (preserving the origin), mapping $U$ onto a neighborhood of the origin
and transforming the above complex structure $J$ on $U$ into the standard complex structure $J_{st}$ in $\CC{n+1}$ (that is, $\chi$ is a $(J,J_{st})$-biholomorphism). The resulting  smooth strictly pseudoconvex image of $M$ we still denote by $M$, and the pseudoconvex and the pseudoconcave sides of it respectively we still denote by $U^\pm$.

We shall consider now   the above families $\mathcal S^\pm$ on $U^\pm$ respectively, after applying the diffeomorphism $\chi$. 
We recall that elements of both families $\mathcal S^\pm$ are $J$-invariant and have the transversality property, hence they become 
{\em 
families of holomorphic curves} on $U^\pm$ respectively. This allows us to consider, in the same fashion as in Section 2, their lifting to the space of $1$-jets $J^{1,n}$, and this results in two foliations  defined in some  domains 
$\widehat U^\pm\subset E$ with $\pi(\widehat U^\pm)=U^\pm$ respectively  (here $E$ is again the affine subset of the bundle of $1$-jets of complex hypersurfaces). In particular, we may consider the respective holomorphic direction fields defined in the same domains. As follows from the above, these two directions fields extend smoothly to $M_J$. 

Importantly, we can {\em not} conclude at this step that the domain $\widehat U^-$ has the form $U^-\times\Omega$, where $\Omega$ is an open neighborhood of the origin in $\CC{n}$. This is because $1$-jets of elements of $\mathcal S^-$ passing through a point $p^-\in U^-$ do not cover a full neighborhood of the origin of a uniform size. To see the latter, we note that the 
"pencil" of Segre varieties passing through $p^-\in U^-$ is the union of 
"pencils"
of Segre varieties from $\mathcal S^+$ at points belonging to the Segre variety $S_{p^-}\in\mathcal S^+$. Unlike the situation in the real-analytic case, $S_{p-}$ is defined so far {\em only} as a variety in $U^+$, 
which is not a full neighborhood of $0$. 
(In the real-analytic case, such "one-sided" Segre varieties extend analytically across $M$ and become varieties in a uniform neighborhood of $0$). 
That is why possible jets of Segre varieties through $p^-$ in our construction form a "smaller" set in the space of $1$-jets (compared to the real-analytic case) and thus do not give a uniform neighborhood of the origin.
 (However, the domain $\widehat U^+$ does have the desired form $U^+\times\Omega$ for an open neighborhood $\Omega$ of the origin). 

To overcome the latter difficulty, we use Webster's considerations from \cite{websterrefl} to conclude that, after possibly 
shrinking the neighborhood $U$:

\medskip

\noindent (i) There exists a wedge $W^+\subset\widehat U^+$ with the totally real edge $M_J$;

\smallskip

\noindent (ii) The {\em reflection map} $\tau$ defined as
\begin{equation}\Label{tau}
\tau(z, T_z S_\zeta)=(\zeta,T_\zeta S_z), \quad z\in\widehat U^\pm,\,\, \zeta\in S_z
\end{equation} 
is anti-holomorphic, well defined in $\widehat U^\pm$, and smooth up to $M_J$;

\smallskip

\noindent (iii) The image $\tau(W^+)$ contains a wedge $W^-\subset\widehat U^-$, and $\tau$ satisfies
$$\tau\circ\tau=\mbox{Id}$$
(thus $\tau$ is an anti-holomorphic involution);

\smallskip  

\smallskip

\noindent (iv) $\tau|_{M_J}=\mbox{Id}$.

\medskip

To prove (i)-(iii) we note that, even though \cite{websterrefl} deals with the real-analytic case, the proof of (i)-(iii) in \cite{websterrefl} is based solely on the approximation of $M$ by a quadric to order $3$ and the implicit function theorem subsequently, that is why it can be essentially word-by-word repeated in our situation. We leave the details here to the reader. Statement (iv) follows directly from the construction in Section 4.2 (alternatively, in can be viewed from (i)-(iii) read together).

We now use the edge-of-the-wedge theorem (e.g. \cite{rosay}) to conclude that $\tau$ extends anti-holomorphically to a full neighborhood of the origin in $E$ (still being an involution, by uniqueness). For simplicity, we denote the latter neighborhood by $\widehat U$. 

Let us consider finally (locally near the origin) the fixed point set $M'\subset U$ of $\tau$. In view of the fact that $\tau$ is an anti-holomorphic involution, $M'$ is a {\em real-analytic} totally real submanifold in $\widehat U$ of dimension $2n+1$. In view of (iv), we conclude that $M'=M_J$, i.e. both $M_J$ and $M$ are {real-analytic}. The proof
of 
\autoref{main}
is complete.

\qed

\end{document}